\newtheorem{thm}{Theorem}[section]
\newtheorem{proposition}[thm]{Proposition}
\newtheorem{corollary}[thm]{Corollary}
\newtheorem{observation}[thm]{Observation}
\newcommand\ex{\ensuremath{\mathrm{ex}}}
\newcommand\thre{\ensuremath{\mathrm{th}}}
\newcommand\cH{{\mathcal H}}
\newcommand\cN{{\mathcal N}}
\newcommand{\ignore}[1]{}
\title{A note on the uniformity threshold for Berge hypergraphs}
\author{Dániel Gerbner} 
\date{\small Alfr\'ed R\'enyi Institute of Mathematics}
\begin{document}

\maketitle

\begin{abstract}
    A Berge copy of a graph is a hypergraph obtained by enlarging the edges arbitrarily.
    Gr\'osz, Methuku and Tompkins in 2020 showed that for any graph $F$, there is an integer $r_0=r_0(F)$, such that for any $r\ge r_0$, any $r$-uniform hypergraph without a Berge copy of $F$ has $o(n^2)$ hyperedges. The smallest such $r_0$ is called the uniformity threshold of $F$ and is denoted by $\ensuremath{\mathrm{th}}(F)$. They showed that $\ensuremath{\mathrm{th}}(F)\le R(F,F')$, where $R$ denotes the off-diagonal Ramsey number and $F'$ is any graph obtained form $F$ by deleting an edge.
    
    We improve this bound to $\ensuremath{\mathrm{th}}(F)\le R(K_{\chi(F)},F')$, and use the new bound to determine $\ensuremath{\mathrm{th}}(F)$ exactly for several classes of graphs.
\end{abstract}

\section{Introduction}

Given a graph $G$ and a hypergraph $\cH$, we say that $\cH$ is a \textit{Berge} copy of $G$ (Berge-$G$ in short) if $V(G)\subset V(\cH)$ and there is a bijection $f:E(G)\rightarrow E(\cH)$ such that for any edge $e\in E(G)$, we have $e\subset f(e)$. We also say that $G$ is the \textit{core} of $\cH$. Note that there are several non-isomorphic Berge copies of a graph, and a hypergraph is a Berge copy of several non-isomorphic graphs. Also, a hypergraph can have multiple isomorphic cores.

Berge copies (or Berge hypergraphs), extending the notion of hypergraph cycles due to Berge, were introduced by Gerbner and Palmer \cite{gp1}. Since then, extremal problems for Berge hypergraphs have attracted a lot of attention, see Section 5.2.2 of \cite{gp} for a survey.

In this paper we are concerned with the maximum number of hyperedges in $r$-uniform $n$-vertex hypergraphs that do not contain any Berge copy of a given graph $F$ (in short: Berge-$F$-free hypergraphs). We denote this quantity by $\ex_r(n,\textup{Berge-}F)$. Note that a 2-uniform Berge copy of $F$ is $F$, and $\ex(n,F):=\ex_2(n,\textup{Berge-}F)$ is the Tur\'an number of $F$.

Gerbner and Palmer showed that if the uniformity is large enough, then a Berge-$F$-free hypergraph can have at most quadratic many hyperedges.

\begin{proposition}[Gerbner, Palmer,\cite{gp1}]
If $r\ge |V(F)|$, then $\ex_r(n,\textup{Berge-}F)=O(\ex(n,F))=O(n^2)$.
\end{proposition}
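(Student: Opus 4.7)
My plan is a greedy assignment that builds an auxiliary simple graph $G$ on $V(\cH)$ and a partial matching between hyperedges and edges of $G$. Process the hyperedges of $\cH$ in arbitrary order, starting with $G = \emptyset$. For each hyperedge $h$, if $\binom{h}{2} \not\subseteq E(G)$, pick any $e_h \in \binom{h}{2} \setminus E(G)$, add $e_h$ to $G$, and call $h$ \emph{good} with assigned edge $e_h$; otherwise call $h$ \emph{bad}. By construction the map $h \mapsto e_h$ is a bijection from good hyperedges onto $E(G)$ satisfying $e_h \subset h$, while each bad hyperedge $h$ has all of $\binom{h}{2}$ already in $E(G)$, so its $r$ vertices span a clique $K_r$ in $G$.

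Next I would use this bijection to transfer $F$-copies in $G$ back to Berge-$F$-copies in $\cH$. If $G$ contains $F$ with edges $e_1,\ldots,e_m$, then the corresponding good hyperedges $h_1,\ldots,h_m$ are distinct (since distinct edges of $G$ were assigned by distinct hyperedges) and each satisfies $e_i \subset h_i$, yielding a Berge-$F$ in $\cH$. Consequently, if $\cH$ is Berge-$F$-free then $G$ must be $F$-free.

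Finally, the hypothesis $r \geq |V(F)|$ enters exactly here: an $F$-free graph $G$ contains no $K_{|V(F)|}$, hence no $K_r$, so there are no bad hyperedges. Therefore every hyperedge of $\cH$ is good, and
\[
|E(\cH)| \;=\; |E(G)| \;\leq\; \ex(n,F) \;=\; O(n^2).
\]
There is no real obstacle in this argument; the single key observation is that a bad hyperedge would force an $r$-clique in $G$, which under $r \geq |V(F)|$ already contains $F$. The entire proof is a one-pass greedy assignment combined with this observation, and incidentally yields the explicit (stronger) bound $\ex_r(n,\textup{Berge-}F) \leq \ex(n,F)$ whenever $r \geq |V(F)|$.
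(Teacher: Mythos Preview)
Your argument is correct. The paper itself does not give a proof of this proposition; it simply quotes the result from Gerbner and Palmer \cite{gp1}. Your greedy-assignment proof is essentially the original Gerbner--Palmer argument: build an auxiliary graph $G$ by assigning to each hyperedge a new sub-edge whenever possible, observe that the resulting injection from $E(G)$ back to hyperedges transfers any copy of $F$ in $G$ to a Berge-$F$ in $\cH$, and note that a hyperedge with no new sub-edge would yield a $K_r\supseteq K_{|V(F)|}\supseteq F$ in $G$. As you remark, this in fact gives the sharp inequality $\ex_r(n,\textup{Berge-}F)\le \ex(n,F)$ for $r\ge |V(F)|$, not merely the big-$O$ version stated here.
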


This was improved by Gr\'osz, Methuku and Tompkins \cite{GMT} in the case the uniformity is even larger.

\begin{thm}[Gr\'osz, Methuku, Tompkins \cite{GMT}]
For any graph $F$, there is an integer $r_0=r_0(F)$ such that for any $r\ge r_0$, $\ex_r(n,\textup{Berge-}F)=o(n^2)$.
\end{thm}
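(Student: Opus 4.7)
The plan is to establish the bound $\thre(F) \le R(F, F')$ alluded to in the introduction, where $F'$ is obtained from $F$ by deleting an edge $e = \{x, y\}$. Fix $\varepsilon > 0$, and suppose $\cH$ is an $r$-uniform, $n$-vertex hypergraph with $r \ge R(F, F')$ and $|E(\cH)| \ge \varepsilon n^2$; the goal is to exhibit a Berge-$F$ in $\cH$ for all sufficiently large $n$.

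First, I would perform a cleanup to control pair co-degrees. For a constant $C = C(F) \ge |E(F)|$, iteratively delete any hyperedge that contains a pair whose current co-degree (the number of hyperedges containing it) falls below $C$. Each deletion is charged to one such low-co-degree pair, and each pair can trigger at most $C$ deletions, so at most $C \binom{n}{2}$ hyperedges are removed. A sub-hypergraph $\cH^*$ should therefore survive in which every pair appearing in a hyperedge of $\cH^*$ has co-degree at least $C$ in $\cH^*$, and the point is to ensure that $\cH^*$ still contains at least one hyperedge.

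The core of the argument is a Ramsey step inside a single surviving hyperedge $h \in \cH^*$. Since $|h| \ge R(F, F')$, two-coloring the pairs of $h$ as ``red'' (co-degree in $\cH^*$ at least some $M > C$) or ``blue'' (co-degree in $[C, M)$) yields either a red copy of $F$ or a blue copy of $F'$ inside $h$. In the red case, every edge of this $F$ has at least $M \ge |E(F)|$ hyperedges containing it, so a greedy selection produces $|E(F)|$ pairwise-distinct hyperedges realizing a Berge-$F$. In the blue case, both endpoints of the missing edge $e$ of $F = F' \cup \{e\}$ lie inside $h$, so $h$ itself can serve as the hyperedge for $e$; for each of the $|E(F')|$ blue edges of $F'$ one then picks distinct hyperedges from $\cH^* \setminus \{h\}$, which succeeds because the cleanup guarantees each such pair has co-degree at least $C \ge |E(F)|$.

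The main obstacle is coordinating the cleanup threshold $C$ with the density $\varepsilon$: the naive bound $C\binom{n}{2}$ on deletions requires $\varepsilon > C/2 \ge |E(F)|/2$, which excludes the interesting regime of small $\varepsilon$. Overcoming this likely requires either a more subtle iterative cleanup---perhaps working inside the link of a carefully chosen vertex, or repeatedly replacing $\cH$ by denser substructures---or replacing the single Ramsey step by an inductive construction that builds the Berge-$F$ one edge at a time while destroying only a few hyperedges per stage. Resolving this tension is the technical heart of the proof, and is the point at which using the off-diagonal Ramsey number $R(F, F')$ rather than $R(F, F)$ provides leverage.
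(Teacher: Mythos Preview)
Your Ramsey-inside-a-hyperedge step is the right idea, and it matches what the paper does. The gap you flag, however, is genuine and is not repaired by iterating the cleanup or by passing to links: any scheme that deletes a hyperedge whenever it contains a pair of co-degree below $|E(F)|$ will remove $\Theta(n^2)$ hyperedges in the worst case, swamping $\varepsilon n^2$ for small $\varepsilon$. Nothing in the off-diagonal Ramsey number gives you leverage here, because the obstruction is purely a counting one and is independent of which Ramsey colouring you use.

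The paper never performs a cleanup. It fixes a single threshold $t=|E(F)|$ and calls a pair $t$-heavy or $t$-light according to its co-degree in $\cH$ itself. Inside any hyperedge $h$ of size at least $R(F,F\setminus e)$, this $2$-colouring yields either a $t$-heavy copy of $F\setminus e$ or a $t$-light copy of $F$. A $t$-heavy $F\setminus e$ inside $h$ is greedily extended to a Berge-$(F\setminus e)$ using hyperedges other than $h$ (exactly your blue case), and then $h$ supplies the missing edge $e$, contradicting Berge-$F$-freeness. Hence \emph{every} hyperedge contains a $t$-light copy of $F$. The missing idea is how to bound the number of such hyperedges: the shadow graph of a Berge-$F$-free $r$-uniform hypergraph has $o(n^{|V(F)|})$ copies of $F$ (a random $2$-uniform projection of the hyperedges is $F$-free and retains a positive fraction of those shadow copies of $F$ whose edges come from distinct hyperedges; the remaining copies are $O(n^{|V(F)|-1})$), so by the graph removal lemma a set $S$ of $o(n^2)$ shadow edges meets every copy of $F$. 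Every hyperedge with a $t$-light $F$ therefore contains a $t$-light edge of $S$, and each $t$-light edge lies in fewer than $t$ hyperedges, giving at most $(t-1)|S|=o(n^2)$ hyperedges in total.

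So the ingredient you are missing is the removal lemma applied to the shadow graph; it replaces the global cleanup entirely. Your second threshold $M$ is also unnecessary: a single threshold $t=|E(F)|$ suffices, since the heavy case already supports the greedy extension and the light case is handled by the removal-lemma count rather than by co-degree.
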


The smallest possible $r_0$ in the above theorem is called the \textit{uniformity threshold} of $F$ and is denoted by $\thre(F)$. 
Gr\'osz, Methuku and Tompkins \cite{GMT} initiated the study of the uniformity threshold. They proved the following general upper bound.

\begin{thm}[Gr\'osz, Methuku, Tompkins \cite{GMT}]\label{grmeto}
For any graph $F$ and any edge $e$ of $F$, we have $\thre(F)\le R(F,F\setminus e)$.
\end{thm}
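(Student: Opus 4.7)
The plan is to prove the contrapositive: set $r=R(F,F\setminus e)$ and show that any $r$-uniform Berge-$F$-free hypergraph $\cH$ on $n$ vertices has $o(n^2)$ hyperedges. So fix $\epsilon>0$ and suppose for contradiction that $|E(\cH)|\ge\epsilon n^2$ for arbitrarily large $n$. Choose a constant $M$ depending on $|V(F)|$, $|E(F)|$ and $\epsilon$, and call a pair $\{u,v\}\subseteq V(\cH)$ \emph{heavy} if it lies in at least $M$ hyperedges and \emph{light} otherwise. The total pair-weight equals $\binom{r}{2}|E(\cH)|\ge\binom{r}{2}\epsilon n^2$, while the contribution of light pairs is at most $M\binom{n}{2}$, so once $M$ is fixed, heavy pairs carry a positive fraction of the weight. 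A standard averaging step then shows that after discarding $o(n^2)$ exceptional hyperedges, a positive proportion of the $\binom{r}{2}$ internal pairs of every remaining hyperedge $h$ are heavy.

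Fix such a typical $h$ and 2-colour $\binom{V(h)}{2}$ red/blue according to heavy/light. Since $|V(h)|=r\ge R(F,F\setminus e)$, there is either a red copy of $F$ on some $|V(F)|$-subset of $V(h)$, or a blue copy of $F\setminus e$ on some $|V(F)|$-subset of $V(h)$. In the \emph{red case}, every edge of the red $F$ is heavy, hence lies in at least $M\ge|E(F)|$ hyperedges. Assigning hyperedges to the edges of $F$ greedily, at stage $i$ at most $i-1<|E(F)|\le M$ hyperedges are already in use, so a fresh one is always available; this yields a Berge-$F$ in $\cH$, a contradiction.

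In the \emph{blue case}, we have a copy of $F\setminus e$ inside $h$ with every edge light. Since $V(F\setminus e)=V(F)$, both endpoints $u^\star,v^\star$ of the missing edge $e$ also sit in $h$, so we may use $h$ itself as the hyperedge assigned to $e$. It then remains to pick $|E(F)|-1$ further distinct hyperedges, different from $h$, each containing the corresponding edge of $F\setminus e$. The main obstacle is precisely here: a light pair might fail to lie in any hyperedge other than $h$, so there may simply be no candidate for some edge of the blue $F\setminus e$.

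To defeat this obstacle I would run the dichotomy globally rather than at a single $h$. There are $(1-o(1))|E(\cH)|$ typical hyperedges, each triggering the Ramsey alternative; I would classify a hyperedge as \emph{defective} if its blue case cannot be completed into a Berge-$F$, and bound the number of defective hyperedges by double counting. Concretely, every defective $h$ must contain an $|V(F)|$-subset $S$ all of whose edges are blocked from having distinct auxiliary hyperedges; this can be charged to a light pair inside $h$ plus the at most $M-1$ other hyperedges through it, so each defective configuration forces the removal of only $O_{F,M}(1)$ hyperedges. Tuning $M$ so that the defective hyperedges contribute less than $\epsilon n^2/2$ while the red case still yields a genuine contradiction completes the argument. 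This local-to-global reduction of the Ramsey dichotomy is the technical crux I would expect to spend most effort on.
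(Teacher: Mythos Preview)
Your heavy/light colouring and the Ramsey dichotomy are exactly the right framework, but you have assigned the two colours to $F$ and $F\setminus e$ the wrong way round, and that is what creates the obstacle you then struggle with. The paper's proof (via Proposition~\ref{fo} with $H=F$) colours pairs so that the Ramsey alternative inside a surviving hyperedge $h$ is ``$t$-\emph{heavy} $F\setminus e$'' versus ``$t$-\emph{light} $F$'', with $t=|E(F)|$. In the heavy case one embeds the $|E(F)|-1$ edges of $F\setminus e$ greedily into distinct hyperedges (each heavy edge lies in at least $t$ of them) and then uses $h$ itself for the missing edge $e$; this is Observation~\ref{obs1}. In the light case one does \emph{not} try to embed at all: instead, since the shadow graph of a Berge-$F$-free hypergraph contains $o(n^{|V(F)|})$ copies of $F$ (Proposition~\ref{propp} with $H=F$, using $\ex(n,F,F)=0$), the removal lemma gives an $o(n^2)$ edge-set covering every such copy, and hence only $o(n^2)$ hyperedges can contain a $t$-light $F$ (Proposition~\ref{lig}). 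Deleting those, every remaining hyperedge contradicts Ramsey.

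With your colour assignment the light object is $F\setminus e$, and the removal-lemma route is blocked: one would need $\ex(n,F\setminus e,F)=o(n^{|V(F)|})$, which is false in general (already for $F=K_3$ one has $\ex(n,P_3,K_3)=\Theta(n^3)$). You are then forced to embed using light edges, and your proposed charging does not yield $o(n^2)$: in a linear hypergraph every pair is light and lies in exactly one hyperedge, so ``charge $h$ to a light pair inside it'' gives only the trivial bound $|E(\cH)|\le\binom{n}{2}$, and tuning $M$ does not help since increasing $M$ only creates more light pairs. Separately, your averaging step (a positive proportion of heavy pairs in each surviving hyperedge) neither follows from the computation you give nor is used anywhere downstream. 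The fix is simply to swap the roles of $F$ and $F\setminus e$ in the Ramsey dichotomy; then both cases close cleanly and the global double-counting is unnecessary.
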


Here we let $F\setminus e$ denote an arbitrary graph obtained by deleting an edge from $F$.
$R$ denotes the off-diagonal Ramsey number, i.e. $R(H,G)$ is the smallest number $r$ of vertices such that if we color each the edge of $K_r$ to blue or red, then we can find either a mono-blue $H$ or a mono-red $G$.

Using the above theorem, Gr\'osz, Methuku and Tompkins \cite{GMT} determined the uniformity threshold exactly for seven graphs on at most 5 vertices, including the triangle. Other than those, the uniformity threshold is known only in some "easy" cases, where $\ex_r(n,\textup{Berge-}F)$ is well studied for every $r$. For example, $\thre(F)=2$ for bipartite graphs with a vertex whose removal results in a forest \cite{gmv}, and $\thre(F)=3$ for odd cycles of length more than three \cite{gyl}.

Our main result is the following bound.

\begin{thm}\label{main}
For any graph $F$ and any edge $e$ of $F$, we have $\thre(F)\le R(K_{\chi(F)},F\setminus e)$.
\end{thm}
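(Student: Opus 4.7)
My plan is to follow the proof strategy of Theorem~\ref{grmeto} by Grósz, Methuku, and Tompkins, sharpening only the Ramsey input. The guiding observation is that a red copy of $F\setminus e$ inside a single hyperedge $h$, together with $h$ itself covering the missing edge $e$, already assembles into a Berge copy of $F$; so the ``blue'' side of the Ramsey dichotomy only has to produce a $K_{\chi(F)}$, not the full $F$ used by Grósz--Methuku--Tompkins, and this is what yields the improved bound.

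Concretely, I would fix a constant $t = t(F) \ge |E(F)|$ and, given an $r$-uniform hypergraph $\cH$ on $n$ vertices with $r \ge R(K_{\chi(F)}, F\setminus e)$ and no Berge copy of $F$, colour a pair $\{u,v\}\in\binom{V(\cH)}{2}$ red if its codegree $d_\cH(u,v) \ge t$ and blue otherwise; write $R$ and $B$ for the resulting graphs on $V(\cH)$. First I would observe that $R$ is $F$-free: any copy of $F$ in $R$ has each of its edges in at least $t \ge |E(F)|$ hyperedges, and a greedy assignment of distinct hyperedges produces a Berge-$F$. Hence by Erdős--Stone, $|E(R)| \le \bigl(1 - \tfrac{1}{\chi(F)-1}\bigr)\binom{n}{2} + o(n^2)$. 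Next, for each hyperedge $h$, I would apply Ramsey's theorem to the red/blue coloured $K_r$ on the vertices of $h$: since $r \ge R(K_{\chi(F)}, F\setminus e)$, this forces a red $F\setminus e$ or a blue $K_{\chi(F)}$ inside $h$. If some hyperedge $h$ contains a red $F\setminus e$, I Berge-ify it with distinct red hyperedges other than $h$ (possible by $t \ge |E(F)|$) and then use $h$ itself to cover the missing edge $e$, whose endpoints lie in $h$. This produces a Berge copy of $F$, contradicting the assumption. Hence every hyperedge of $\cH$ contains a blue $K_{\chi(F)}$, and therefore at least $\binom{\chi(F)}{2}$ blue pairs.

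A double-counting of blue-pair--hyperedge incidences then gives
\[
\binom{\chi(F)}{2}\cdot |E(\cH)| \;\le\; \sum_{\{u,v\}\in E(B)} d_\cH(u,v) \;<\; t\cdot |E(B)|.
\]
Combining this with the Erdős--Stone bound on $|E(R)|$ together with a stability/supersaturation refinement---paralleling the closing step of the Grósz--Methuku--Tompkins argument---should deliver $|E(\cH)| = o(n^2)$, as required.

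The hard part will be precisely this final counting step. The trivial estimate $|E(B)| \le \binom{n}{2}$ gives only $|E(\cH)| = O(n^2)$, which is already the Gerbner--Palmer bound; squeezing out the extra $o(1)$ factor forces one to exploit the near-Turán structure of $R$ (and hence $B$) to control how blue $K_{\chi(F)}$'s distribute across hyperedges when the complement of $R$ is nearly $(\chi(F)-1)$-partite. I expect this stability argument, closely modelled on the closing portion of Grósz--Methuku--Tompkins, to carry most of the technical weight, while the conceptually new point---replacing $F$ by $K_{\chi(F)}$ on the blue side of Ramsey---is handled cleanly by the observation that $h$ itself covers the missing edge.
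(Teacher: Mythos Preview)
Your skeleton up through the Ramsey step is exactly the paper's: partition shadow pairs into $t$-heavy (``red'') and $t$-light (``blue''), observe that a heavy $F\setminus e$ inside a hyperedge $h$ can be completed to a Berge-$F$ using $h$ for the missing edge, and conclude by Ramsey that every hyperedge contains a light $K_{\chi(F)}$. The gap is entirely in your proposed finishing step.

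Double-counting blue pairs and invoking Erd\H os--Stone/stability on $R$ does not work. Erd\H os--Stone gives an \emph{upper} bound on $|E(R)|$, hence a \emph{lower} bound on $|E(B)|$ --- the wrong direction for your inequality $\binom{\chi(F)}{2}\,|E(\cH)| < t\,|E(B)|$. And there is no reason $R$ should be near-extremal (it could have $o(n^2)$ edges), so stability for $R$ has no purchase. More fundamentally, counting blue \emph{edges} is too coarse: a single light $K_{\chi(F)}$ can sit inside at most $t-1$ hyperedges, but its $\binom{\chi(F)}{2}$ light edges could each appear in $t-1$ \emph{different} hyperedges, so the blue-pair count overestimates wildly.

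The paper's closing move (and, as you suspect, the GMT closing move) is the removal lemma, not stability. One first shows that the shadow graph of a Berge-$F$-free $r$-uniform hypergraph contains only $o(n^{\chi(F)})$ copies of $K_{\chi(F)}$; this uses the Gerbner--Palmer $O(n^2)$ bound on the number of hyperedges together with the Alon--Shikhelman fact that $\ex(n,K_{\chi(F)},F)=o(n^{\chi(F)})$ whenever $F$ embeds in a blow-up of $K_{\chi(F)}$ (see Proposition~\ref{propp} and Corollary~\ref{coro1}). Removal then gives a set $S$ of $o(n^2)$ shadow edges meeting every $K_{\chi(F)}$. Each hyperedge's light $K_{\chi(F)}$ contains some edge of $S$, necessarily light, and a light edge lies in fewer than $t$ hyperedges; hence $|E(\cH)|\le (t-1)|S|=o(n^2)$. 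This is Proposition~\ref{lig} combined with Corollary~\ref{coro2}. Your ``supersaturation'' instinct was pointing in the right direction, but the object to apply it to is the count of $K_{\chi(F)}$'s in the shadow, not the edge count of $R$.
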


We prove another bound for a class of graphs. Before stating our next theorem, we have to introduce a notion closely connected to Berge hypergraphs.

Given graphs $H$ and $G$, we let $\cN(H,G)$ denote the number of copies of $H$ in $G$. Given $F$, we let $\ex(n,H,F):=\{\cN(H,G): G \text{ is an $F$-free $n$-vertex graph}\}$. This quantity is called the \textit{generalized Tur\'an number} of $H$ and $F$. After several sporadic result, the systematic study of generalized Tur\'an problems was initiated by Alon and Shikhelman \cite{as}.

The connection to Berge hypergraphs was established by Gerbner and Palmer \cite{gp2}, who showed that $\ex(n,K_r,F)\le \ex_r(n,\textup{Berge-}F)\le\ex(n,K_r,F)+\ex(n,F)$.

\begin{thm}\label{kicsi}
If $\ex(n,K_3,F)=o(n^2)$, then $\thre(F)\le (\chi(F)-1)(|V(F)|-1)+1$.
\end{thm}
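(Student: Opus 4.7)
My first step would be to note that the hypothesis $\ex(n,K_3,F) = o(n^2)$ forces $\chi(F) \le 3$: if $\chi(F) \ge 4$, the Turán graph $T_{\chi(F)-1}(n)$ is $(\chi(F)-1)$-chromatic and hence $F$-free, yet it contains $\Theta(n^3) = \omega(n^2)$ triangles, contradicting the hypothesis. So I would only need to treat $\chi(F) \in \{2,3\}$.

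For $\chi(F) = 2$, the target bound $(\chi(F)-1)(|V(F)|-1)+1$ equals $|V(F)|$, and the Gerbner-Palmer inequality $\ex_r(n,\textup{Berge-}F) \le \ex(n,K_r,F) + \ex(n,F)$ gives the result directly: for $r \ge |V(F)|$, every $F$-free graph contains no $K_r$ (since $F \subseteq K_r$), so $\ex(n,K_r,F) = 0$; and since $F$ is bipartite, $\ex(n,F) = o(n^2)$ by the K\H{o}v\'ari-S\'os-Tur\'an theorem. Hence $\ex_r(n,\textup{Berge-}F) \le \ex(n,F) = o(n^2)$.

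For the main case $\chi(F) = 3$ with $r \ge 2|V(F)|-1$, I would argue by contradiction: assume $\cH$ is $r$-uniform, Berge-$F$-free, with $e(\cH) \ge cn^2$ for some $c > 0$. Applying the greedy pair-assignment underlying the Gerbner-Palmer bound yields an $F$-free graph $G$ on $V(\cH)$ with $|E(G)| = e(\cH) \ge cn^2$ (every hyperedge is assigned, because $r \ge |V(F)|$ rules out an unassigned hyperedge spanning a $K_r \subseteq G$). By hypothesis $G$ has $o(n^2)$ triangles, so via the triangle removal lemma together with Erd\H{o}s-Simonovits stability I expect $G$ to be $o(n^2)$-close to a bipartite graph; I would fix a corresponding near-bipartition $V(\cH) = A \cup B$. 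The pigeonhole $r \ge 2|V(F)|-1$ then forces $|V(h)\cap A| \ge |V(F)|$ or $|V(h)\cap B|\ge |V(F)|$ for every hyperedge $h$, so a positive fraction of hyperedges have a $|V(F)|$-element vertex set on one fixed side, say $A$. Using this abundance of $A$-heavy hyperedges and a Hall-type matching on the codegree structure of $\cH$, I would embed a proper $3$-coloring of $F$ into $\cH$ by placing one color class inside $A \cap V(h)$ for a suitable hyperedge $h$ and covering each remaining edge of $F$ with a distinct hyperedge, producing a Berge-$F$ and the desired contradiction.

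The main obstacles I anticipate are (a) the sub-extremal regime $|E(G)| < (1/4 - \delta)n^2$, where stability need not apply directly and one probably has to argue via a density-increment to pass to a dense subset where the bipartition structure holds; and (b) executing the Berge-$F$ construction through Hall's theorem, verifying that the codegrees arising from the $A$-heavy hyperedges are large enough to match distinct hyperedges to distinct edges of $F$. Both steps have to leverage the full strength of $\ex(n,K_3,F)=o(n^2)$ together with the structural threshold $r \ge (\chi(F)-1)(|V(F)|-1)+1$.
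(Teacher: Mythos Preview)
Your reduction to $\chi(F)\le 3$ and the treatment of the bipartite case are fine, but the argument for $\chi(F)=3$ has two genuine gaps, both of which you flag yourself and neither of which is repairable along the lines you suggest.

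First, the stability step fails in the sub-extremal regime. Triangle-free graphs with $cn^2$ edges need not be close to bipartite when $c<1/4$: a balanced blow-up of $C_5$ has $n^2/5$ edges and is $\Omega(n^2)$-far from every bipartite graph. A density increment does not help, since passing to a denser induced subgraph still only yields a triangle-free graph, not a near-bipartite one; and even if you located a near-bipartite piece, nothing forces the hyperedges of $\cH$ to concentrate there.

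Second, the embedding step is not an argument but a hope. Knowing that many hyperedges have $|V(F)|$ vertices on one side of a bipartition tells you nothing about which \emph{pairs} are covered by hyperedges. To build a Berge-$F$ you need $|E(F)|$ distinct hyperedges, each containing a prescribed pair of the embedded copy; a Hall-type matching would require codegree lower bounds that you have not established and that do not follow from the hypotheses.

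The paper avoids both issues via a different mechanism. Set $t=|E(F)|$ and call a shadow edge $t$-\emph{heavy} if it lies in at least $t$ hyperedges, $t$-\emph{light} otherwise. Then: (i) no hyperedge contains a $t$-heavy $F\setminus e$ (else greedily extend to a Berge-$F$); (ii) only $o(n^2)$ hyperedges contain a $t$-light $K_{\chi(F)}$ (removal lemma); (iii) only $o(n^2)$ hyperedges contain a triangle with exactly two heavy edges---this is where $\ex(n,K_3,F)=o(n^2)$ is used, applied to an auxiliary $F$-free graph built from those triangles. In any remaining hyperedge $h$, the heavy edges form vertex-disjoint cliques (no mixed triangle), each of size at most $|V(F)|-1$ (no heavy $F$), and there are at most $\chi(F)-1$ of them (no light $K_{\chi(F)}$), so $|h|\le (\chi(F)-1)(|V(F)|-1)$. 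The heavy/light decomposition inside each hyperedge is the key idea your approach is missing.
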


Using the above upper bounds and known results on Ramsey numbers, we can determine the uniformity threshold exactly for several classes of graphs. Let $F_k$ denote the \textit{$k$-fan}, $k$ triangles sharing a vertex. Let $B_t$ denote the \textit{book with $t$ pages}, $t$ triangles sharing an edge. Let $W_k$ denote the \textit{wheel with $k$ spokes}, a $C_k$ with an additional vertex connected to each vertex of the cycle. Let $B_{p,q}(m)$ denote the \textit{generalized book}, $m$ copies of $K_q$ each sharing a fixed set of $p$ vertices.

\begin{thm}\label{rams}
We have  $\thre(F)= (\chi(F)-1)(|V(F)|-1)+1$ if $F$ is the $k$-fan with $k>1$, $B_t$ with $t>1$, wheel graph $W_k$ with $k\ge 6$ an even number, or a generalized book $B_{p,q}(m)$ with $m$ large enough. In particular

\begin{itemize}
    \item $\thre(F_k)=4k+1$ for $k>1$,
    \item $\thre(B_t)=2t+3$ for $t>1$,
    \item $\thre(W_k)=2k+1$ for even $k>4$,
    \item $\thre(B_{p,q}(m))=(q-1)(m(q-p)+p-1)+1$ for $m$ large enough.
\end{itemize}
\end{thm}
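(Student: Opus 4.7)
The plan is to establish matching upper and lower bounds of $(\chi(F)-1)(|V(F)|-1)+1$ on $\thre(F)$ for each of the four families listed.

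For the upper bound, I would split according to which of Theorem~\ref{main} and Theorem~\ref{kicsi} is the easier tool. For fans $F_k$ ($k\ge 2$) and books $B_t$ ($t\ge 2$), I would first verify the hypothesis of Theorem~\ref{kicsi}: in any $F_k$-free graph every vertex $v$ has at most $k-1$ independent edges in $G[N(v)]$ (else an $F_k$ centered at $v$ appears), which via the Erdős--Gallai bound on graphs without a large matching and summation over $v$ gives $\ex(n,K_3,F_k)=O(n)$; and in a $B_t$-free graph every edge lies in at most $t-1$ triangles, so $\ex(n,K_3,B_t)=O(n)$. In both cases $\ex(n,K_3,F)=o(n^2)$, and Theorem~\ref{kicsi} immediately yields $\thre(F_k)\le 4k+1$ and $\thre(B_t)\le 2t+3$. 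For even wheels $W_k$ ($k\ge 6$) and generalized books $B_{p,q}(m)$ with $m$ large, Theorem~\ref{kicsi} is unavailable (e.g., $K_{n/2,n/2}$ plus a matching on one side is $W_k$-free yet has $\Theta(n^2)$ triangles), so I invoke Theorem~\ref{main} instead. Using $F\setminus e\subseteq F$ and the known Ramsey values $R(K_3,W_k)=2k+1$ for even $k\ge 6$ (Surahmat--Baskoro--Broersma) and $R(K_q,B_{p,q}(m))=(q-1)(m(q-p)+p-1)+1$ for $m$ large enough, Theorem~\ref{main} gives the required upper bound in the remaining cases.

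For the lower bound I must construct, for $r=(\chi(F)-1)(|V(F)|-1)$ and each listed $F$, an $r$-uniform Berge-$F$-free hypergraph on $n$ vertices with $\Omega(n^2)$ hyperedges. The natural template is a blow-up of the Chvátal--Harary 2-coloring witnessing $R(K_{\chi(F)},F\setminus e)\ge r+1$: partition $V$ into $\chi(F)-1$ classes of size roughly $n/(\chi(F)-1)$, each class further divided into $|V(F)|-1$ blocks, and declare as hyperedges those $r$-subsets that contain exactly one vertex from each block (alternatively, edges of a bipartite host graph with a block-structured set of dummy vertices attached). Any hypothetical Berge-$F$ copy would, by chromatic-number considerations, either induce a ``red'' $K_{\chi(F)}$ across classes or embed a connected subgraph containing $F\setminus e$ into a single class; since each class spans only $|V(F)|-1$ blocks, exactly as in the Chvátal--Harary argument, both options are forbidden.

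The main obstacle is verifying Berge-$F$-freeness of the construction. The naive simplification—attach a single shared set $A$ of $r-2$ dummies to every edge of a bipartite host graph—does admit Berge-$F$ copies, because one can map all of $V(F)$ into $A$ and then use any $|E(F)|$ distinct hyperedges to certify the embedding (every hyperedge contains $A$). Defeating this requires distributing the dummies across hyperedges following the Chvátal--Harary block structure and performing a case analysis over each partition $V(F)=X_0\cup X_1$ of ``dummy versus real'' vertices in the embedded~$F$. The precise conditions ``$k\ge 2$'', ``even $k\ge 6$'', and ``$m$ large enough'' in the theorem enter here: they guarantee that $|V(F)|$ is big enough that the relevant subgraphs of $F$ (in particular $F\setminus e$) are connected and the Chvátal--Harary-style obstruction closes, which is the most technical step of the proof.
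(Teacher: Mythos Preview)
Your upper-bound argument is sound and essentially matches the paper's: the paper applies Theorem~\ref{main} uniformly to all four families, citing that fans, books, even wheels and large generalized books are $\chi(F)$-good (Li--Rousseau, Rousseau--Sheehan, Burr--Erd\H{o}s, Nikiforov--Rousseau respectively), and then remarks that Theorem~\ref{kicsi} gives an alternative route for fans and books via the Alon--Shikhelman bounds you reproduce. So on this half you are fine, just with the split reversed.

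The lower bound, however, has a genuine gap. Your explicit construction---take all $r$-sets meeting each of the $(\chi(F)-1)(|V(F)|-1)$ blocks in exactly one vertex---is \emph{not} Berge-$F$-free. Simply place the $|V(F)|$ core vertices in $|V(F)|$ distinct blocks (possible since $r\ge |V(F)|$); every pair of core vertices then lies in $\Omega((n/r)^{r-2})$ transversals, so one can greedily assign distinct hyperedges to the $|E(F)|$ edges of $F$. The Chv\'atal--Harary colouring you invoke blocks a monochromatic $K_{\chi(F)}$ inside a $2$-coloured $K_r$, but says nothing about embedding $F$ itself across many blocks, so the ``red $K_{\chi(F)}$ vs.\ $F\setminus e$ in one class'' dichotomy you sketch does not hold for the core of a Berge copy. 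The paper sidesteps all of this: it quotes the Gr\'osz--Methuku--Tompkins lower bound $\thre(F)\ge (c_t(F)-1)t+1$ as a black box and proves a short corollary that $c_{|V(F)|-1}(F)=\chi(F)$ whenever $F$ has a universal vertex $v$ with $F\setminus v$ connected (or bipartite with at least two non-trivial components). All four families have a universal vertex, so the lower bound $(\chi(F)-1)(|V(F)|-1)+1$ drops out immediately without building any hypergraph. You should either reduce to the Gr\'osz--Methuku--Tompkins theorem as the paper does, or reproduce their actual construction (which is not the transversal hypergraph).
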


The structure of the paper is as follows. In Section 2, we prove our theorems through a series of propositions. 
We finish the paper with some concluding remarks in Section 3.

\section{Proofs}

We will use the removal lemma \cite{efr}. It states that if an $n$-vertex graph $G$ has $o(n^{|V(H)|})$ copies of $H$, then we can make $G$ $H$-free by removing $o(n^2)$ edges.

The \textit{shadow graph} of a hypergraph $\cH$ is the graph $G$ on the same vertex set with $uv\in E(G)$ if and only if there is a hyperedge of $\cH$ containing both $u$ and $v$.

Lu and Wang \cite{lw} initiated the study of the maximum number of edges in the shadow graph of a Berge-$F$-free $r$-uniform $n$-vertex hypergraph. This quantity is called the \textit{$r$-cover Tur\'an number} of $F$ and is denoted by $\hat{\ex}_r(n,\textup{Berge-}F)$. We initiate the study of a generalized version of this, the straightforward analogy of generalized Tur\'an numbers. We let $\hat{\ex}_r(n,H,\textup{Berge-}F)$ denote the maximum number of copies of $H$ in the shadow graph of an $r$-uniform Berge-$F$-free $n$-vertex hypergraph.

\begin{proposition}\label{propp} If $r\ge |V(F)|$, then $\hat{\ex}_r(n,H,\textup{Berge-}F)\le O(\ex(n,H,F))+O(n^{|V(H)|-1})$.
\end{proposition}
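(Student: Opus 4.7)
The plan is to imitate the Gerbner--Palmer matching argument that bounds $\ex_r(n,\textup{Berge-}F)$, but to keep track of copies of $H$ in the shadow rather than just the total number of hyperedges. Given a Berge-$F$-free $r$-uniform hypergraph $\cH$ on $n$ vertices with $r\ge|V(F)|$ and shadow $G$, the first step is to form the bipartite incidence graph $B$ between $\cH$ and $\binom{V(\cH)}{2}$ (with $h\sim p$ iff $p\subseteq h$) and take a maximum matching $M$, which gives an injection $\phi\colon\cH_1\to G^*$, where $\cH_1\subseteq\cH$ and $G^*\subseteq E(G)$ are the sides saturated by $M$.

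Next I would show that $G^*$ is $F$-free: if $G^*$ contained $F$, then $\phi^{-1}$ would yield distinct hyperedges representing its edges, producing a Berge-$F$ in $\cH$. By maximality of $M$, every $h\in\cH\setminus\cH_1$ would satisfy $\binom{h}{2}\subseteq G^*$, so $h$ would induce a $K_r\supseteq F$ in $G^*$; since $G^*$ is $F$-free, this forces $\cH_1=\cH$ and hence $|\cH|=|G^*|\le\ex(n,F)$. The $F$-freeness of $G^*$ then gives $\cN(H,G^*)\le \ex(n,H,F)$, which handles copies of $H$ that lie entirely inside $G^*$.

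The remaining task is to bound the number of copies of $H$ using at least one edge of the remainder $G\setminus G^*$. Every such edge $e$ lies in $\binom{h}{2}\setminus\{\phi(h)\}$ for some $h\in\cH$, so its two endpoints are inside the $r$-set $h$, which also contains the good pair $\phi(h)\in G^*$. The route is to charge each bad copy of $H$ to a pair $(h,e)$ with $e$ a bad edge of the copy inside $h$: fixing $h$ and $e$ leaves at most $O(n^{|V(H)|-2})$ ways to complete the copy.

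The main obstacle is that summing this naively over $h\in\cH$ yields $|\cH|\cdot O(n^{|V(H)|-2})=O(n^{|V(H)|})$ rather than the desired $O(n^{|V(H)|-1})$, since $|\cH|$ can be as large as $\ex(n,F)=\Theta(n^2)$. To gain the missing factor of $n$, the plan is to amortize by shifting the bad edge $e$ to its partner $\phi(h)\in G^*$: since $e$ and $\phi(h)$ share the hyperedge $h$, the shift changes at most two vertices of the given $H$-copy, producing a nearby configuration that is already accounted for inside the $\cN(H,G^*)$ term. Making this shift consistent, and handling copies that contain several bad edges (possibly lying in different hyperedges) without double-counting, is the hardest technical point of the proof.
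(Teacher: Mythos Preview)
Your setup via a maximum matching is fine through the point where you deduce $\cH_1=\cH$, $|\cH|\le \ex(n,F)=O(n^2)$, and $\cN(H,G^*)\le \ex(n,H,F)$. The gap is exactly the step you flag: your ``shifting'' of a bad edge $e\subset h$ to $\phi(h)$ does not produce a copy of $H$ in $G^*$. The pair $\phi(h)$ may share no vertex with $e$, and even when it does, replacing $e$ by $\phi(h)$ destroys the isomorphism type and the edge pattern in general; there is no map from bad $H$-copies to (good $H$-copy, bounded data) with bounded fibres. So the amortization you sketch cannot be made to give $O(n^{|V(H)|-1})$.

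The paper sidesteps this by changing both the splitting criterion and the choice of $G^*$. First, it separates copies of $H$ by whether some hyperedge contains at least three of their vertices. Copies of the first kind are counted directly: choose a hyperedge ($O(n^2)$ ways, using the Gerbner--Palmer bound you also obtained), choose three of its vertices (constant), and choose the remaining $|V(H)|-3$ vertices freely, giving $O(n^{|V(H)|-1})$. For the remaining copies (each hyperedge meets them in at most two vertices), the paper replaces the maximum matching by a \emph{random} sub-edge: independently for each $h\in\cH$ pick a uniform pair in $\binom{h}{2}$, and let $G'$ be the resulting graph. Then $G'$ is $F$-free for the same reason your $G^*$ is, and every copy of the second kind survives in $G'$ with probability at least $\binom{r}{2}^{-|E(H)|}$, because distinct edges of such a copy lie in disjoint sets of hyperedges and hence behave independently. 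Taking expectations yields at most $\binom{r}{2}^{|E(H)|}\ex(n,H,F)$ such copies. The randomness is precisely what buys the factor of $n$ you are missing; a deterministic maximum matching has no analogue of this independence.
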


\begin{proof} 
Let $\cH$ be a Berge-$F$-free $r$-uniform $n$-vertex hypergraph. Recall that $\cH$ has $O(n^2)$ hyperedges by a result of Gerbner and Palmer \cite{gp1}.
We distinguish two types of copies of $H$ in the shadow graph $G$ of $\cH$. Those copies of $H$ that contain at least 3 vertices from some hyperedge of $\cH$ can be counted by picking a hyperedge $O(n^2)$ ways, picking 3 vertices of it constant many ways, and then picking $V(H)-3$ other vertices, $O(n^{|V(H)|-3})$ many ways. Therefore, there are $O(n^{|V(H)|-1})$ such hyperedges.

Let us consider now the other copies of $H$. For each hyperedge of $\cH$, let us pick a sub-edge randomly with uniform distribution, independently from the other edges. Let $G'$ be the graph having those edges. Then $G'$ is clearly $F$-free. Observe that a copy of $H$ that shares at most two vertices with any hyperedge of $\cH$ is in $G'$ with probability at least $1/\binom{r}{2}^{|E(H)|}$. Indeed, every edge of the copy of $H$ is in at least one hyperedge of $\cH$, thus it is in $G'$ with probability at least $1/\binom{r}{2}$.
Distinct edges of the copy of $H$ are in $G'$ independently of each other, as they may be included in $G'$ only via distinct hyperedges. This implies that the number of the copies of the second type of $H$ is at most $\binom{r}{2}^{|E(H)|}\cN(H,G')\le \binom{r}{2}^{|E(H)|}\ex(n,H,F)$, completing the proof.
\end{proof}

We are going to be interested in the case when $\hat{\ex}_r(n,H,\textup{Berge-}F)=o(n^{|V(H)|})$. By the above result, it holds if
$\ex(n,H,F)=o(n^{|V(H)|})$. By a result of Alon and Shikhelman \cite{as} this happens if and only if $F$ is a subgraph of a blow-up of $H$.

\begin{corollary}\label{coro1} If $r\ge |V(F)|$ and $F$ is a subgraph of a blow-up of $H$, then
$\hat{\ex}_r(n,H,\textup{Berge-}F)=o(n^{|V(H)|})$.
\end{corollary}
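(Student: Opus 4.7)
The plan is to chain Proposition~\ref{propp} with the characterization due to Alon and Shikhelman that was recalled in the paragraph preceding the corollary. Specifically, Proposition~\ref{propp} already provides
\[
\hat{\ex}_r(n,H,\textup{Berge-}F)\le O(\ex(n,H,F))+O(n^{|V(H)|-1}),
\]
so the task reduces to showing that each of the two summands on the right is $o(n^{|V(H)|})$.

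The second term is trivially $o(n^{|V(H)|})$ since it has strictly smaller exponent. For the first term, I would invoke the result of Alon and Shikhelman \cite{as} stating that $\ex(n,H,F)=o(n^{|V(H)|})$ holds precisely when $F$ is a subgraph of some blow-up of $H$; under the corollary's hypothesis this applies, yielding $O(\ex(n,H,F))=o(n^{|V(H)|})$. Adding the two bounds gives the desired conclusion.

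I do not anticipate any substantive obstacle: the corollary is a formal consequence of Proposition~\ref{propp} combined with a cited result, and no further combinatorial argument is needed. The only care required is to make sure the hypothesis $r\ge |V(F)|$ is used only through Proposition~\ref{propp} (it is), and that the implicit constants hidden in the $O(\cdot)$'s depend only on $r$ and $H$, so that the $o(\cdot)$ asymptotics in $n$ are preserved when we sum the two terms.
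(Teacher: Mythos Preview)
Your proposal is correct and matches the paper's own reasoning exactly: the corollary is derived by plugging the Alon--Shikhelman characterization $\ex(n,H,F)=o(n^{|V(H)|})$ (valid precisely when $F$ is a subgraph of a blow-up of $H$) into the bound of Proposition~\ref{propp}, together with the trivial estimate $O(n^{|V(H)|-1})=o(n^{|V(H)|})$. No additional argument is needed.
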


What we are actually interested in is the following quantity. We say that a set $E'$ of edges cover a subgraph of $G$ if the subgraph contains an edge from $G$. Let $C(H,G)$ denote the minimum number of edges in $G$ that cover each copy of $H$. Let $x(n,H,F)$ denote the largest value of $C(H,G)$ in $F$-free $n$-vertex graphs $G$. Let $\hat{x}_r(n,H,\textup{Berge-}F)$ denote the largest value of $C(H,G)$ in the shadow graph $G$ of a Berge-$F$-free $n$-vertex $r$-uniform hypergraph. Corollary \ref{coro1} and the removal lemma imply the following.

\begin{corollary}\label{coro2} If $r\ge |V(F)|$ and $F$ is a subgraph of a blow-up of $H$, then
$\hat{x}_r(n,H,\textup{Berge-}F)=o(n^{2})$. In particular, $\hat{x}_r(n,K_{\chi(F)},\textup{Berge-}F)=o(n^{2})$.
\end{corollary}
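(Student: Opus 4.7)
The plan is to combine Corollary \ref{coro1} with the removal lemma stated at the opening of Section 2. The corollary tells us that the shadow graph $G$ of any Berge-$F$-free $r$-uniform $n$-vertex hypergraph contains only $o(n^{|V(H)|})$ copies of $H$, as soon as $r\ge |V(F)|$ and $F$ embeds into a blow-up of $H$. The removal lemma then provides a set $E'\subseteq E(G)$ with $|E'|=o(n^2)$ such that $G\setminus E'$ is $H$-free; in particular, every copy of $H$ in $G$ must use at least one edge of $E'$, so $E'$ is a cover of the copies of $H$ in the sense defined in the text.

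First I would fix an arbitrary Berge-$F$-free $r$-uniform $n$-vertex hypergraph $\cH$ and let $G$ be its shadow graph. Invoking Corollary \ref{coro1}, I get $\cN(H,G)\le \hat{\ex}_r(n,H,\textup{Berge-}F)=o(n^{|V(H)|})$. Then I would apply the removal lemma to $G$ with the forbidden subgraph $H$ to obtain a set $E'$ of $o(n^2)$ edges whose removal destroys every copy of $H$. By definition of $C(H,G)$, this yields $C(H,G)\le |E'|=o(n^2)$. Since $\cH$ was arbitrary, $\hat{x}_r(n,H,\textup{Berge-}F)=o(n^2)$.

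For the "in particular" statement, I would verify that $F$ is a subgraph of a blow-up of $K_{\chi(F)}$: indeed, fix any proper coloring of $F$ with $\chi(F)$ colors and let $V_1,\dots,V_{\chi(F)}$ be the color classes; then $F$ embeds in the complete $\chi(F)$-partite graph with parts $V_1,\dots,V_{\chi(F)}$, which is exactly a blow-up of $K_{\chi(F)}$. So the hypothesis of the general statement is met with $H=K_{\chi(F)}$, yielding $\hat{x}_r(n,K_{\chi(F)},\textup{Berge-}F)=o(n^2)$.

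There is no real obstacle here; the work has already been done in Proposition \ref{propp} and Corollary \ref{coro1}, and the only nontrivial external input is the removal lemma, whose hypothesis is exactly the conclusion of Corollary \ref{coro1}. The one point requiring a line of care is the chromatic observation that $K_{\chi(F)}$ admits $F$ as a subgraph of its blow-up, so that the corollary's hypothesis applies to $H=K_{\chi(F)}$.
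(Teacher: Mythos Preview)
Your proposal is correct and follows exactly the approach the paper indicates: the paper states only that the corollary follows from Corollary~\ref{coro1} and the removal lemma, and you have filled in precisely those details, including the verification that $F$ embeds in a blow-up of $K_{\chi(F)}$ via a proper coloring.
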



Given a hypergraph $\cH$, we say that an edge $uv$ of the shadow graph is $t$-heavy if $u$ and $v$ are contained in at least $t$ hyperedges of $\cH$. Otherwise $uv$ is $t$-light. We say that a subgraph of the shadow graph is $t$-heavy (resp. $t$-light) if each edge of it is $t$-heavy (resp. $t$-light).

Let us describe first the main advantage of heavy edges. 

\begin{observation}\label{obs1}
Assume that $t\ge |E(F)|$ 
and we find a Berge copy of a subgraph $F'$ of $F$ in $\cH$, such that its core is extended to a copy of $F$ with $t$-heavy edges in the shadow graph $G$. Then this copy is the core of a Berge-$F$ in $\cH$.
\end{observation}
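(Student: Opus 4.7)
The plan is a direct greedy extension of the Berge structure witnessing the Berge-$F'$ to a Berge-$F$. By hypothesis we have a core copy of $F'$ inside $\cH$ together with an injection $f : E(F') \to E(\cH)$ such that $e \subset f(e)$ for every $e \in E(F')$, and the vertex set of this core has been enlarged (inside the shadow graph $G$) to a copy of $F$ in which every edge of $E(F) \setminus E(F')$ is $t$-heavy. So the only thing missing is to assign, injectively, a hyperedge of $\cH$ to each edge of $E(F) \setminus E(F')$ in a way that is compatible with the existing $f$ and respects containment.

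Enumerate the edges of $E(F) \setminus E(F')$ as $e_1, \dots, e_k$ in an arbitrary order, where $k = |E(F)| - |E(F')|$. Process them one at a time. When we reach $e_i$, we have already chosen at most $|E(F')| + (i-1) \le |E(F)| - 1$ hyperedges, namely the values of $f$ on $E(F')$ together with the hyperedges already assigned to $e_1,\dots,e_{i-1}$. Because $e_i$ is $t$-heavy in the shadow graph, the number of hyperedges of $\cH$ that contain both endpoints of $e_i$ is at least $t \ge |E(F)|$. Hence strictly more such hyperedges exist than have been used, and we may pick a fresh one, call it $f(e_i)$, satisfying $e_i \subset f(e_i)$.

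After all edges of $E(F) \setminus E(F')$ have been treated, $f$ is defined on all of $E(F)$, is injective into $E(\cH)$ by construction, and satisfies $e \subset f(e)$ for every $e \in E(F)$. This is exactly the data of a Berge-$F$ whose core is the prescribed copy of $F$, proving the observation.

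The argument is essentially a pigeonhole/Hall-type count rather than anything delicate; the only place where care is needed is the bookkeeping that makes the threshold $t \ge |E(F)|$ tight, which is why we count both the hyperedges already used for $E(F')$ and those used in earlier steps of the greedy loop in a single bound of $|E(F)| - 1$.
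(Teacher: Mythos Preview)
Your proof is correct and follows essentially the same greedy extension argument as the paper: process the additional $t$-heavy edges one by one, noting that at each step at most $|E(F)|-1$ hyperedges have been used while at least $t\ge |E(F)|$ are available, so a fresh hyperedge can always be selected. The only difference is that you spell out the bookkeeping more explicitly than the paper does.
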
 

\begin{proof}
All we need to do is to pick distinct hyperedges for the additional edges. We go through those additional edges in an arbitrary order, and pick such an edge arbitrarily. There are at most $|E(F)|-1$ hyperedges picked earlier (either already in the original Berge copy of $F'$, or picked for an earlier one of the additional edges). Thus we can pick a new hyperedge for each of the additional edges, to complete the Berge copy of $F$.
\end{proof} 

In particular, this implies that if $\cH$ is Berge-$F$-free, then there is no $t$-heavy copy of $F$ in the shadow graph.

\begin{proposition}\label{lig}
If $\cH$ is a Berge-$F$-free $r$-uniform hypergraph, then there are at most $(t-1)\hat{x}_r(n,H,\textup{Berge-}F)$ hyperedges in $\cH$ containing a $t$-light copy of $H$.
\end{proposition}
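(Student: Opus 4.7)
The plan is to exploit a minimum covering set of copies of $H$ in the shadow graph, and observe that any $t$-light copy must be covered by a $t$-light edge. First I would fix a minimum set $E'$ of edges in the shadow graph $G$ of $\cH$ that covers every copy of $H$; by definition of $\hat{x}_r$ we may take $|E'|\le \hat{x}_r(n,H,\textup{Berge-}F)$. Every copy of $H$ in $G$ contains at least one edge of $E'$, and in particular every $t$-light copy of $H$ does. Since a $t$-light copy has only $t$-light edges, the edge of $E'$ witnessing this cover must itself be $t$-light. Let $E''\subseteq E'$ be the set of $t$-light edges in $E'$; then every $t$-light copy of $H$ in $G$ is covered by some edge of $E''$.

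Next I would bound the number of hyperedges containing a $t$-light copy of $H$ by a double-counting argument over $E''$. If a hyperedge $e\in \cH$ contains a $t$-light copy of $H$, then $e$ must contain both endpoints of some edge $uv\in E''$. By definition of $t$-light, there are at most $t-1$ hyperedges of $\cH$ whose vertex set contains both $u$ and $v$. Summing over the edges of $E''$ gives at most $(t-1)|E''|\le (t-1)|E'|\le (t-1)\hat{x}_r(n,H,\textup{Berge-}F)$ hyperedges containing a $t$-light copy of $H$, which is the desired bound.

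The argument is essentially bookkeeping, so I do not anticipate a substantive obstacle; the only point that needs care is the observation that the edge of $E'$ covering a $t$-light copy is automatically $t$-light, so that the multiplicity bound from the $t$-lightness can legitimately be applied. Note also that the Berge-$F$-freeness of $\cH$ is not actually used in the proof itself — it is already built into the definition of $\hat{x}_r(n,H,\textup{Berge-}F)$, which refers to shadow graphs of Berge-$F$-free hypergraphs.
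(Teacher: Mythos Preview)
Your proof is correct and follows essentially the same approach as the paper: take a minimum covering set of copies of $H$ in the shadow graph, observe that a $t$-light copy must be covered by a $t$-light edge of that set, and then bound the number of hyperedges via the fact that each $t$-light edge lies in at most $t-1$ hyperedges. Your write-up is in fact slightly more careful than the paper's (which leaves implicit the step of restricting to the $t$-light edges of the covering set), and your observation that the Berge-$F$-freeness is used only through the definition of $\hat{x}_r$ is also accurate.
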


\begin{proof}
Let $G$ be the shadow graph of $\cH$ and $S$ be a set of $\hat{x}_r(n,H,\textup{Berge-}F)$ edges covering each copy of $H$. Then there is a $t$-light edge of $S$ inside every hyperedge that contains a $t$-light copy of $H$. Each $t$-light edge of $S$ is counted in at most $t-1$ hyperedges, thus there are at most $(t-1)|S|$ hyperedges, completing the proof.
\end{proof}

\begin{proposition}\label{fo}
If $F$ is a subgraph of a blow-up of $H$, then $\thre(F)\le R(H,F\setminus e)$.
\end{proposition}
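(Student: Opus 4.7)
The plan is to fix $t = |E(F)|$ and $r \ge R(H, F\setminus e)$, let $\cH$ be a Berge-$F$-free $r$-uniform $n$-vertex hypergraph with shadow graph $G$, and argue that every hyperedge of $\cH$ must contain a $t$-light copy of $H$ in $G$. Once this is shown, Proposition \ref{lig} bounds the number of hyperedges of $\cH$ by $(t-1)\hat{x}_r(n,H,\textup{Berge-}F)$, and Corollary \ref{coro2} --- which applies because $F$ is a subgraph of a blow-up of $H$ and $r \ge R(H,F\setminus e) \ge |V(F\setminus e)| = |V(F)|$ --- forces this quantity to be $o(n^2)$. Therefore $\ex_r(n,\textup{Berge-}F) = o(n^2)$, so $\thre(F) \le r$.

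The heart of the argument is a Ramsey step on a single hyperedge. Fix a hyperedge $h$; its vertex set spans a $K_r$ in $G$. Two-color the pairs inside $h$ by declaring a pair red if it is $t$-heavy and blue if it is $t$-light. If $h$ contains no $t$-light copy of $H$, then the blue subgraph is $H$-free, and the defining property of $R(H, F\setminus e)$ forces a red copy of $F\setminus e$, that is, a $t$-heavy copy of $F\setminus e$ in $G$ with vertex set inside $V(h)$. Since the two endpoints of $e$ in this copy both lie in $V(h)$, the shadow graph contains the missing edge as well, producing a copy of $F$ in $G$ whose $F\setminus e$ part is $t$-heavy.

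To finish, invoke Observation \ref{obs1} with $F'$ equal to the single edge $e$: the hyperedge $h$ realizes a Berge copy of $F'$ (using $h$ for $e$), its core extends to the copy of $F$ just constructed, and the remaining $|E(F)|-1$ edges of that copy are all $t$-heavy with $t = |E(F)|$. The observation then produces a Berge-$F$ in $\cH$, contradicting its Berge-$F$-freeness. Hence every hyperedge of $\cH$ contains a $t$-light $H$, as claimed. The only mildly delicate point is orienting the two-coloring correctly, so that the unwanted Ramsey alternative is exactly the $t$-heavy $F\setminus e$ which, together with the hyperedge itself playing the role of $e$, can be promoted via Observation \ref{obs1} to a full Berge-$F$; everything else is a direct combination of the tools developed earlier in the section.
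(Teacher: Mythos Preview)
Your proof is correct and follows essentially the same approach as the paper: both use the $t$-heavy/$t$-light two-coloring on each hyperedge, apply the Ramsey bound $R(H,F\setminus e)$ to force either a $t$-light $H$ or a $t$-heavy $F\setminus e$, promote the latter to a Berge-$F$ via Observation~\ref{obs1} (with $h$ itself serving as the hyperedge for $e$), and then invoke Proposition~\ref{lig} and Corollary~\ref{coro2} to conclude $o(n^2)$. The only cosmetic difference is that the paper first deletes the $o(n^2)$ hyperedges containing a $t$-light $H$ and shows the remainder is empty, whereas you directly argue every hyperedge contains a $t$-light $H$; these are logically equivalent.
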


\begin{proof} Let $\cH$ be an $r$-uniform Berge-$F$-free $n$-vertex hypergraph and let $t=|E(F)|$. We apply Proposition \ref{lig} to obtain that there are at most $(t-1)\hat{x}_r(n,H,\textup{Berge-}F)$ hyperedges in $\cH$ containing a $t$-light copy of $H$. By Corollary \ref{coro2}, $\hat{x}_r(n,H,\textup{Berge-}F)=o(n^2)$, thus there are $o(n^2)$ hyperedges containing a $t$-light copy of $H$. Let $\cH'$ denote the subhypergraph obtained by deleting these $o(n^2)$ hyperedges. We will show that if $r\ge R(H,F\setminus e)$, then $\cH'$ is empty.

By Observation \ref{obs1}, if a hyperedge $h_1$ of $\cH'$ contains a $t$-heavy copy of $F\setminus e$, then that copy is the core of a Berge-$F\setminus e$ in $\cH'\setminus h$. 
Now we can add $h$, representing $e$, to obtain a copy of Berge-$F$ in $\cH'$, a contradiction. We obtained that every hyperedge $h$ of $\cH'$ contains no $t$-heavy $F\setminus e$, nor $t$-light $H$. But $t$-heavy and $t$-light gives a 2-coloring of $h$, thus $h$ has less than $R(H,F\setminus e)\le r$ vertices, a contradiction finishing the proof.
\end{proof}

Theorem \ref{main} immediately follows from the above result by observing that $F$ is a subgraph of a blow-up of $K_{\chi(F)}$. We are also ready to prove Theorem \ref{kicsi}. Recall that it states the upper bound $\thre(F)\le (\chi(F)-1)(|V(F)|-1)+1$ if $\ex(n,K_3,F)=o(n^2)$.

\begin{proof}[Proof of Theorem \ref{kicsi}]
Let $\cH$ be a Berge-$F$-free $r$-uniform $n$-vertex hypergraph and $t=|E(F)|$. As in the proof of Proposition \ref{fo}, we have that no hyperedge contains a $t$-heavy $F\setminus e$ and $o(n^2)$ hyperedges contain a $t$-light copy of $K_{\chi(F)}$. Let $\cH_1$ denote the subhypergraph of those hyperedges that contain a triangle with two $t$-heavy edges and one $t$-light edge. We claim that $\cH_1$ has $o(n^2)$ hyperedges. 

Let us pick a triangle with two $t$-heavy edges and one $t$-light edge for each hyperedge in $\cH_1$ and let $G_1$ denote the graph that contains the three edges picked for each hyperedge. We claim that $G_1$ is $F$-free. Indeed, assume that there is a copy of $F$. First we pick the hyperedges for the $t$-light edges in that copy. This is doable, as for each of those edges $uv$ there is at least one hyperedge $h$ that contains $uv$ such that $uv$ is in the triangle we picked for $h$. Then we pick $h$ for $uv$. This way we pick every hyperedge $h$ at most once, as we picked only one $t$-light edge for $h$. Now we have a Berge copy of a subgraph of $F$, and the core of this subgraph is extended to $F$ by $t$-heavy edges, thus we can use Observation \ref{obs1} to find a Berge-$F$ in $\cH_1$, a contradiction.

Thus we have that $G_1$ is $F$-free and hence by our assumption has $o(n^2)$ triangles. For every hyperedge of $\cH_1$, we picked a triangle in $G_1$. Observe that each triangle $uvw$ was picked at most $t-1$ times. Indeed, the triangle contains a $t$-light edge $uv$, thus at most $t-1$ hyperedges contain this triangle. This implies that $\cH_1$ has $o(n^2)$ hyperedges. 


We will show that there are no further hyperedges if $r\ge (\chi(F)-1)(|V(F)|-1)+1$. Assume that $h$ is a hyperedge without a $t$-light copy of $K_{\chi(F)}$ not in $\cH_1$, and let us consider its subedges. By forbidding triangles with exactly two $t$-heavy edges, we obtain that $t$-heavy edges form vertex-disjoint cliques in $h$. Such a clique has at most $|V(F)|-1$ vertices, as otherwise we have a $t$-heavy copy of $F$. If there are $k$ vertex-disjoint $t$-heavy cliques, then there is a $t$-light $K_k$, thus $k<\chi(F)$. Hence $h$ has at most $(\chi(F)-1)(|V(F)|-1)$ vertices, a contradiction.
\end{proof}

Let us turn our attention to lower bounds.
Gr\'osz, Methuku and Tompkins \cite{GMT} proved a general lower bound. We say that a partition of $V(F)$ into
sets of size at most $t$ is a \textit{$t$-admissible partition} of $F$ if there is at most one edge between any two sets in $F$. Given a $t$-admissible partition $P$, we let $F(P)$ be the graph which has the sets of the partition as vertices, with $UU'$ being an edge if and only if there is a vertex in $U$ connected to a vertex of $U'$.
Given $F$, we let $c_t(F)$ denote the smallest chromatic number of graphs $F(P)$, where $P$ is a $t$-admissible partition.

\begin{thm}[Gr\'osz, Methuku, Tompkins \cite{GMT}]
Let $F$ be a graph with $c_t(F)\ge 3$ and $1\le t\le |V(F)|-1$. Then $\thre(F)\ge (c_t(F)-1)t+1$.
\end{thm}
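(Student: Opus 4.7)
The plan is to exhibit a Berge-$F$-free $r$-uniform hypergraph on $n$ vertices with $\Omega(n^2)$ hyperedges at uniformity $r=(c_t(F)-1)t$; this forces $\thre(F)\ge (c_t(F)-1)t+1$. Write $\chi=c_t(F)$ and $r=(\chi-1)t$, and choose a prime $m=\Theta(n/((\chi-1)t))$ (via Bertrand's postulate, discarding at most $O(t)$ leftover vertices). Partition the vertex set into pairwise disjoint bundles $B_{c,i}$ of size $t$ indexed by a color $c\in\{1,\dots,\chi-1\}$ and an index $i\in\mathbb{F}_m$, and for each $(a,b)\in\mathbb{F}_m^2$ include the hyperedge
\[
H_{a,b} \;=\; \bigcup_{c=1}^{\chi-1} B_{c,\,a+(c-1)b}.
\]
This yields $m^2=\Theta(n^2)$ hyperedges, each of size exactly $(\chi-1)t=r$. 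The key property I need is that given bundles $B_{c_1,i_1}$ and $B_{c_2,i_2}$ with $c_1\ne c_2$, the linear system $a+(c_j-1)b=i_j$ for $j\in\{1,2\}$ has a unique solution over $\mathbb{F}_m$ because $c_2-c_1\in\mathbb{F}_m^{*}$; consequently every pair of bundles of different colors lies in exactly one hyperedge.

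To verify Berge-$F$-freeness I argue by contradiction: suppose a Berge copy of $F$ exists with injection $e\mapsto h_e$ from $E(F)$ to the hyperedges. For $v\in V(F)$ let $b(v)$ denote its bundle, and let $P$ be the resulting partition of $V(F)$, whose parts have size at most $t$. If $uv\in E(F)$ with $b(u)\ne b(v)$, then $h_{uv}$ contains both bundles, and since each hyperedge uses exactly one bundle per color, $b(u)$ and $b(v)$ must have distinct colors. If two edges of $F$ ran between the same pair of parts $A\ne B$ of $P$, then both of their hyperedges would contain the same pair of differently colored bundles and thus coincide by uniqueness, contradicting the injectivity of $e\mapsto h_e$. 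Hence $P$ is a $t$-admissible partition, and coloring each part by the color of its bundle gives a proper $(\chi-1)$-coloring of $F(P)$, so $\chi(F(P))\le \chi-1<c_t(F)$, contradicting the definition of $c_t(F)$.

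The main obstacle is engineering the hyperedge family so that every pair of differently colored bundles is contained in a unique hyperedge while the total hyperedge count stays quadratic in $n$; the affine-line formula $a+(c-1)b$ accomplishes both at once, and the uniqueness property is precisely what promotes the bundle partition of any putative Berge copy into a $t$-admissible one. The remaining bookkeeping---selecting a prime $m$ of the right order of magnitude and absorbing the leftover $O(t)$ vertices into a single auxiliary hyperedge---is routine and does not affect the $\Omega(n^2)$ bound.
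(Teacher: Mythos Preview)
The paper does not supply its own proof of this statement; it is quoted from \cite{GMT} as a known lower bound and used without argument, so there is nothing in the paper to compare your attempt against.

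Your construction and argument are essentially correct. The affine-line family $H_{a,b}$ over $\mathbb{F}_m$ has the crucial property that any two bundles of different colors lie in exactly one hyperedge (this uses only $m>\chi-2$, which holds for large $n$), and that property is precisely what forces the bundle partition of a hypothetical Berge copy to be $t$-admissible with $F(P)$ properly $(\chi-1)$-colored, contradicting $c_t(F)=\chi$.

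One minor slip in the bookkeeping: the number of leftover vertices after selecting a prime $m\le n/((\chi-1)t)$ is not $O(t)$ in general---Bertrand only guarantees $m\ge n/(2(\chi-1)t)$, so the leftover can be a constant fraction of $n$---and ``absorbing them into a single auxiliary hyperedge'' is both ill-defined (the cardinality need not be $r$) and risky (an extra hyperedge could create a Berge-$F$). The fix is trivial: simply leave those vertices isolated. This does not affect the $m^2=\Omega(n^2)$ hyperedge count, so the conclusion $\thre(F)\ge (c_t(F)-1)t+1$ stands.
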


Observe that partitioning into singletons is $t$-admissible, thus $c_t(F)\le \chi(F)$. This means that the lower bound on $\thre(F)$ is at most $(\chi(F)-1)(|V(F)|-1)+1$. Let us compare this to our upper bounds.
Theorem \ref{kicsi} has the same quantity as an upper bound, and $(\chi(F)-1)(|V(F)|-1)+1$ is also a well-known and easy lower bound on $R(K_{\chi(F)},F\setminus e)$ if $F\setminus e$ is connected \cite{chv}. 

This means that to find graphs where we can determine the uniformity threshold, we should check the cases of equality in the lower and upper bounds.

\begin{corollary}
Let $F$ consist of a subgraph $F_0$ and an additional vertex $v$ connected to each vertex of $F_0$. If $F_0$ is connected or $F_0$ is a bipartite graph such that at least two of its components contain an edge,
then $\thre(F)\ge (\chi(F)-1)(|V(F)|-1)+1$.
\end{corollary}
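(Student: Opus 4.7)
The plan is to obtain the bound from Theorem~2.9 (Gr\'osz, Methuku, Tompkins) applied with $t=|V(F)|-1$, which gives $\thre(F)\ge(c_t(F)-1)t+1$ whenever $c_t(F)\ge 3$. Since the singleton partition $P$ of $V(F)$ is $t$-admissible and yields $F(P)=F$, one automatically has $c_t(F)\le\chi(F)$, so matching the advertised bound reduces to showing $c_{|V(F)|-1}(F)\ge\chi(F)$: that is, $\chi(F(P))\ge\chi(F)$ for every $t$-admissible partition $P$ with $t=|V(F)|-1$.

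To analyse an arbitrary such partition, I would first exploit that the apex $v$ is joined in $F$ to every vertex of $F_0$. If $U_v$ denotes the part containing $v$, then for any other part $U'$ the apex alone already contributes $|U'|$ edges between $U_v$ and $U'$, so admissibility forces $|U'|\le 1$; hence every part besides $U_v$ is a singleton. Writing $U_v=\{v\}\cup A$ with $A\subseteq V(F_0)$ and $B:=V(F_0)\setminus A$, the one edge permitted between $U_v$ and each singleton $\{w\}$, $w\in B$, is already realised by $vw$, so $F_0$ has no edge between $A$ and $B$. In particular each connected component of $F_0$ lies wholly in $A$ or wholly in $B$, and $F(P)$ is the cone on $F_0[B]$ with apex $U_v$, giving $\chi(F(P))=1+\chi(F_0[B])$. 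Since $\chi(F)=1+\chi(F_0)$, the task reduces to showing $\chi(F_0[B])\ge\chi(F_0)$ in every case.

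If $A=\emptyset$ the inequality is trivial; otherwise the size bound $|U_v|\le|V(F)|-1$ also forces $B\ne\emptyset$, yielding a nontrivial separation of $V(F_0)$ with no crossing edges. In case~(a), this is incompatible with the connectedness of $F_0$, so only the singleton partition survives and the case is finished. In case~(b), $\chi(F_0)=2$, and it suffices to exhibit an edge of $F_0$ inside $B$; each of the at least two edge-containing components of $F_0$ must lie wholly in $A$ or in $B$, and the size constraint prevents $A$ from absorbing all of them simultaneously, so at least one edge-component falls in $B$, supplying the required edge.

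The main obstacle is case~(b): one has to use the assumption of \emph{two} edge-containing components to block the partition in which $A$ captures every edge-bearing vertex of $F_0$ while $B$ becomes an independent set, since that partition would give $\chi(F(P))=2<\chi(F)$. Once this size-accounting step is closed, both cases yield $c_{|V(F)|-1}(F)=\chi(F)\ge 3$, so Theorem~2.9 applies and furnishes the claimed lower bound $\thre(F)\ge(\chi(F)-1)(|V(F)|-1)+1$.
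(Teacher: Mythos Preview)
Your approach matches the paper's: apply the Gr\'osz--Methuku--Tompkins lower bound with $t=|V(F)|-1$ and show $c_t(F)=\chi(F)$ by analysing an arbitrary $t$-admissible partition around the apex $v$. Your write-up is in fact more explicit than the paper's in several places (you spell out that $U_v\setminus\{v\}$ must be a union of components of $F_0$, and that $F(P)$ is the cone over $F_0[B]$), and case~(a) is handled identically.

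One subtlety in case~(b) deserves a remark. Your sentence ``the size constraint prevents $A$ from absorbing all of them simultaneously'' relies only on $|A|\le |V(F_0)|-1$; this forces $B\neq\emptyset$, but it does not by itself prevent $A$ from containing every \emph{edge-containing} component when $F_0$ has isolated vertices. For instance, if $F_0=K_2\cup K_2\cup K_1$, taking $A$ to be the four non-isolated vertices yields an admissible partition with $\chi(F(P))=2$. The paper's proof glosses over the same point (it proceeds as though exactly one component is absorbed into the part of $v$ and then invokes a second edge-containing component outside), so your argument is at the same level of rigour as the paper's. If you want the step to be airtight, add the harmless hypothesis that $F_0$ has no isolated vertices; then every component of $F_0$ contains an edge, $B\neq\emptyset$ forces an edge into $B$, and the argument closes cleanly.
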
 

\begin{proof}
Let us consider a $(|V(F)|-1)$-admissible partition $P$ of $F$. If $\{v\}$ is a part, then each other vertex is a part, thus $\chi(F(P))=\chi(F)$. Otherwise $v$ is in a part with another vertex $u$. Then each other neighbor of $u$ must be in the same part. If $F\setminus v$ is connected, then all the vertices must be in that part, a contradiction. 

Otherwise, a connected component of $F_0$ is in the part of $v$. Then the other vertices $w$ must form parts $\{w\}$, as otherwise a part would contain two neighbors of $v$. As $F_0$ contains an edge not in the component of $v$, we have $\chi(F(P))\ge\chi(F)=3$.
\end{proof}

Let us consider now when $R(F\chi(F),F\setminus e)=(\chi(F)-1)(|V(F)|-1)+1$.
This is in fact a well-studied notion in Ramsey theory. A graph $G$ is called \textit{$p$-good} if $R(K_p,G)=(p-1)(|V(G)|-1)+1$.

Now Theorem \ref{rams} follows from Theorem \ref{main} and known results in Ramsey theory that we list below.
Li and Rousseau \cite{lr} showed that the $k$-fan is 3-good for every $k>1$. 
Rousseau and Sheehan \cite{rs} showed that the book $B_t$ is 3-good for every $t>1$. Burr and Erd\H os \cite{be} showed that the wheel $W_k$ is 3-good for $k\ge 5$.
Nikiforov and Rousseau \cite{nr} showed that the generalized book $B_{p,q}(m)$ is $q$-good for $m$ large enough. The threshold on $m$ was improved in \cite{fhw}.

We remark that Theorem \ref{kicsi} also implies two of the statements in Theorem \ref{rams} using the following known results in generalized Tur\'an theory. Alon and Shikhelman \cite{as} showed that $\ex(n,K_3,B_t)=o(n^2)$ and $\ex(n,K_3,F_k)=O(n)$.

\section{Concluding remarks}

The main question regarding the uniformity threshold for Berge hypergraphs is whether it can grow exponentially with the number of vertices for some graphs. Our general upper bounds rely on the Ramsey number, which can. Theorem \ref{main}, together with a theorem from \cite{aks} shows that for graphs $F$ with chromatic number $k$, $\thre(F)$ is polynomial in $|V(F)|$, where the degree of the polynomial may depend on $k$. However, this is still exponential if $\chi(F)$ grows with $|V(F)|$, e.g. for cliques. In fact, for cliques Theorem \ref{main} does not improve Theorem \ref{grmeto}. On the other hand, the lower bound on $\thre(K_k)$ is quadratic for $k$.

We initiated the study of generalized cover Tur\'an numbers for Berge hypergraphs. We only proved Proposition \ref{propp} and its corollaries concerning this notion, but we believe that the question is interesting in general. 
Let us mention that $\hat{\ex}_r(n,F,\textup{Berge-}F)$ may be interesting as well.

Instead of Theorem \ref{kicsi}, we proved the way more general Proposition \ref{fo}. However, we could not show any example where applying Proposition \ref{fo} could improve the bound on $\thre(F)$.

\bigskip

\textbf{Funding}: Research supported by the National Research, Development and Innovation Office - NKFIH under the grants KH 130371, SNN 129364, FK 132060, and KKP-133819.

\end{document}